\newtheorem{theorem}{Theorem}
\newtheorem{lemma}{Lemma}
\newcommand{\C}{\mathbb C}
\newcommand{\N}{\mathbb N}
\newcommand{\D}{\mathbb D}
\title{Density of polynomials in classes of functions on products of planar domains}
\author{P. M. Gauthier and V. Nestoridis}
\address{D\'epartement de math\'ematiques et de statistique, Universit\'e de Montr\'eal,
CP-6128 Centreville, Montr\'eal,  H3C3J7, CANADA}
\email{gauthier@dms.umontreal.ca}
\address{Department of Mathematics, University of Athens Panepisitmioupolis, 15784 Athens, Greece}
\email{vnestor@math.uoa.gr}
\keywords{polydisc algebra, polynomial approximation, chordal distance} \subjclass{Primary: 32E30; Secondary: 46G20 }
\thanks{Research supported by NSERC (Canada)}
\begin{document}

\maketitle

\begin{abstract}
We give sufficient conditions on planar domains $\Omega_i, i\in I,$ where $I$ is an arbitrary set, so that polynomials are dense in $A(\prod_{i\in I}\Omega_i)$ as  well as in $A^\infty(\prod_{i\in I}\Omega_i),$ endowed with their natural topologies.  We also characterize the uniform limits, with respect to the chordal metric, of polynomials on $\prod_{i\in I}\overline\Omega_i.$
\end{abstract}

\section{Introduction}

In \cite{MeN} and \cite{KKN} the space $A^\infty(\Omega)$ was used in order to establish universality of Taylor series in the sense of Luh \cite{L} and Chui and Parnes \cite{CP}. This universality has been proven to be generic, in the closure of polynomials $X^\infty(\Omega)$ in $A^\infty(\Omega),$ with its natural topology. It is an open question whether $X^\infty(\Omega)=A^\infty(\Omega)$ always holds for every planar domain $\Omega,$ such that $\overline\Omega^o=\Omega$ and $[\C\setminus\overline\Omega]\cup\{\infty\}$ is connected. In \cite{NZ} a sufficient condition of geometric nature is given assuring $X^\infty(\Omega)=A^\infty(\Omega).$ The condition is the existence of an $M<+\infty,$ such that every two points in $\Omega$ can be joined in $\Omega$ by a path of length at most $M.$

In Section 3 we extend the previous result to arbitrary products $\Omega=\prod\{\Omega_i:i\in I\},$ where $\Omega_i\subset\C$ is a bounded domain such that $(\overline\Omega_i)^o=\Omega_i,$ $\overline\Omega^c$ is connected and there exist $M_i<+\infty,$ with the property that every two points in $\Omega_i$ may be joined by a path in $\Omega_i$ of length at most $M_i.$ We shall denote the products $\prod\{\Omega_i:i\in I\}$ and  $\prod\{\overline\Omega_i:i\in I$ respectively by $\Omega$ and $\overline\Omega.$

Under the above assumptions, a function $f:\overline\Omega\rightarrow\C$ belongs to the class $A(\Omega),$ if it is continuous on $\overline\Omega$ endowed with the product topology and separately holomorphic in $\Omega.$ We endow $A(\Omega)$ with the supremum norm and we show that, under the above assumptions, the polynomials are dense in $A(\Omega).$ We notice that, even in the case where $I$ is infinite, each polynomial depends on only finitely many variables.

Further, for every function $f\in A(\Omega),$ the function $Df$ is well-defined and separately holomorphic on $\Omega,$ for every differential operator $D$ of the form
$$
(*) \quad \quad \quad \quad D = \frac{\partial^{\alpha_1+\cdots+\alpha_n}}{\partial z_{i_1}^{\alpha_1}\cdots\partial z_{i_n}^{\alpha_n}},
$$
where $i_1,\cdots,i_n\in I, 0\le\alpha_i<+\infty, \alpha_i\in\N,$ for all $i=1,\cdots,n$ and $n\in\N.$

{\bf Definition.} Under the above assumptions and notations, we define $A^\infty(\Omega)$ to be the class of functions $f\in A(\Omega)$ such that $Df\in A(\Omega),$ for every differential operator $D$ satisfying (*). We endow $A^\infty(\Omega)$ with the topology of uniform convergence of every mixed derivative given by an operator $D$ of the form (*). The main theorem proven in Section 3 is the following.

\begin{theorem}\label{polynomials dense}
Under the above assumptions and notations, the polynomials are dense in $A^\infty(\Omega).$
\end{theorem}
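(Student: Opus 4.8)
The plan is to reduce the statement to a finite-dimensional approximation problem and then to settle the latter by induction on the number of variables, the one-variable case being exactly the equality $X^\infty(\Omega_i)=A^\infty(\Omega_i)$ of \cite{NZ}. The topology of $A^\infty(\Omega)$ is generated by the seminorms $f\mapsto\|Df\|_\infty$ with $D$ of the form $(*)$, so a basic neighbourhood of a given $f\in A^\infty(\Omega)$ is specified by finitely many such operators together with a tolerance $\epsilon>0$. As each operator uses only finitely many variables and has finite order, there are a finite set $S\subset I$ and a bound $m=(m_i)_{i\in S}$ such that every operator occurring is one of the finitely many derivatives $\partial^\beta:=\prod_{i\in S}\partial_{z_i}^{\beta_i}$ with $\beta\le m$. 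Hence it suffices, given such $f$, $S$, $m$ and $\epsilon$, to exhibit a polynomial $p$ with $\|\partial^\beta(f-p)\|_\infty<\epsilon$ for all $\beta\le m$.

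First I would reduce to finitely many variables. Each $\partial^\beta f$ with $\beta\le m$ belongs to $A(\Omega)$ and hence is uniformly continuous on the compact set $\overline\Omega$ (compact by Tychonoff). Uniform continuity with respect to the product topology of these finitely many functions furnishes a finite set $T$ with $S\subseteq T\subset I$ and a point $a\in\Omega$ such that, if $\pi$ denotes the map leaving the coordinates in $T$ unchanged and replacing each coordinate outside $T$ by that of $a$, then $\tilde f:=f\circ\pi$ satisfies $\|\partial^\beta(f-\tilde f)\|_\infty<\epsilon/2$ for all $\beta\le m$; indeed $\partial^\beta\tilde f=(\partial^\beta f)\circ\pi$ because $\beta$ is supported in $S\subseteq T$, and $\pi(z)$ agrees with $z$ on $T$. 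The function $\tilde f$ depends only on the coordinates in $T$ and lies in $A^\infty\big(\prod_{i\in T}\Omega_i\big)$, and a polynomial in the $T$-variables is a polynomial on $\Omega$; so the problem is reduced to approximating, within $\epsilon/2$ in the same seminorms, a function of $A^\infty$ of a finite product.

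For a finite product $\prod_{i=1}^n\Omega_i$ I would induct on $n$, the case $n=1$ being \cite{NZ}. The tool is integration against the top-order derivative. For each $i$ let $T_i$ be the antiderivative operator $(T_ih)(z)=\int_{a_i}^{z_i}h\,d\zeta_i$, the integral taken along a path in $\Omega_i$ of length $\le M_i$; one checks that $T_i$ is a well-defined operator $A\to A$ with $\|T_ih\|_\infty\le M_i\|h\|_\infty$, that it preserves polynomials, and that $\partial_{z_i}T_i=\mathrm{id}$. Iterating the one-variable identity $T_i^{k}\partial_{z_i}^{k}u=u-\sum_{j<k}\frac{(z_i-a_i)^j}{j!}(\partial_{z_i}^{j}u)\big|_{z_i=a_i}$ over $i=1,\dots,n$ writes $g:=\tilde f$ as
\begin{equation*}
g=T_1^{m_1}\cdots T_n^{m_n}\,\partial^m g+\sum_{\mathrm{faces}}\frac{(z_i-a_i)^{j}}{j!}\,\big(T\cdots\partial\cdots g\big)\big|_{z_i=a_i},
\end{equation*}
a finite sum in which every term after the first has at least one coordinate frozen at its base value. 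After factoring out the explicit polynomial in the frozen variable, each face term is an antiderivative of a mixed derivative of $g$ restricted to a face, hence belongs to $A^\infty$ of a product of fewer than $n$ domains and is dispatched by the inductive hypothesis; reinstating the polynomial factor and using Leibniz keeps every derivative controlled. For the first term I would approximate $\partial^m g\in A\big(\prod_{i=1}^n\Omega_i\big)$ within $\delta$ by a polynomial $q$ — the density of polynomials in $A$ of a product being already established — and put $p_{\mathrm{main}}=T_1^{m_1}\cdots T_n^{m_n}q$. Because $\partial^\beta T_1^{m_1}\cdots T_n^{m_n}=T_1^{m_1-\beta_1}\cdots T_n^{m_n-\beta_n}$ for $\beta\le m$, the bound $\|T_ih\|_\infty\le M_i\|h\|_\infty$ yields $\|\partial^\beta(T_1^{m_1}\cdots T_n^{m_n}(\partial^m g-q))\|_\infty\le\big(\prod_iM_i^{m_i}\big)\delta$ simultaneously for all $\beta\le m$, so a sufficiently small $\delta$ handles the main term in every seminorm at once. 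Summing the finitely many approximations produces the desired polynomial.

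The main obstacle is the inductive step. One must arrange the iterated fundamental-theorem-of-calculus decomposition so that precisely one term retains all $n$ active variables while each remaining term loses a variable and thereby falls under the inductive hypothesis, and one must control every intermediate mixed derivative, not merely $g$ and $\partial^m g$, through the operators $T_i$. Underlying everything is the assertion that each $T_i$ is well defined from $A$ to $A$ with norm at most $M_i$: it is precisely here that the geometric length bound and the connectivity of the complement enter, and this is the single-variable mechanism behind \cite{NZ}. The rest is the bookkeeping of finitely many terms together with the elementary estimate $\|T_i\|\le M_i$.
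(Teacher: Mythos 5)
Your proof is correct and is essentially the paper's own argument: the same reduction to finitely many variables via uniform continuity on the compact product (for which the paper cites \cite{MN}), the same Mergelyan-type polynomial approximation of the top-order mixed derivative on the product of closures (the paper's Lemma \ref{P(K)=A(K)}), the same bounded-length antiderivative operators with $\|T_i\|\le M_i$ (well defined since the hypotheses force each $\Omega_i$ to be simply connected, a point you rightly flag and the paper glosses over), the same iterated Taylor-remainder decomposition into one full-variable main term plus face terms depending on fewer variables and handled by induction (the paper even remarks that the base case from \cite{NZ} is not really needed), and the same simultaneous seminorm estimate obtained from $\partial^\beta T^m=T^{m-\beta}$. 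The only quibble is a constant: for $\beta\le m$ your bound is $\prod_i M_i^{m_i-\beta_i}\delta$, which is at most $\bigl(\prod_i M_i^{m_i}\bigr)\delta$ only when every $M_i\ge 1$, so replace it by $\prod_i\max(1,M_i)^{m_i}\delta$ or normalize $M_i\le 1$ by rescaling, as the paper does.
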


In Section 4, we consider an arbitrary family of Jordan domains $G_i\subset\C, i\in I,$ where $I$ is an arbitrary set. Let $G=\prod\{G_i:i\in I\}$ and $\overline G=\prod\{\overline G:i\in I\}$ be endowed with the product topology. We investigate the class $\widetilde A(G)$ of uniform limits of polynomials on $\overline G,$ with respect to the chordal distance $\chi.$  We prove the following.

\begin{theorem}\label{chordal}
Under the above assumptions and notations, $\widetilde A(G)$ contains exactly the constant function equal to $\infty$ and all continuous functions $f:\overline G\rightarrow\C\cup\{\infty\},$ where $\overline G$ is endowed with the product topology, such that $f(G)\subset\C$ and $f|_G$ is separately holomorphic.
\end{theorem}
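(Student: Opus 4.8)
The plan is to prove the two inclusions separately, first showing that the listed functions exhaust $\widetilde A(G)$ (necessity) and then that each of them is indeed a $\chi$-uniform limit of polynomials (sufficiency). Throughout, $\overline G$ is compact by Tychonoff's theorem and $\C\cup\{\infty\}$ is a compact metric space under $\chi$, so any $\chi$-uniform limit $f$ of continuous functions is automatically continuous from $\overline G$ into $\C\cup\{\infty\}$; the constant $\infty$ is trivially in $\widetilde A(G)$, being the $\chi$-limit of the constants $p_n\equiv n$.

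For necessity, suppose $p_n\to f$ uniformly in $\chi$ on $\overline G$ with $f\not\equiv\infty$. The set where $f$ is finite is open by continuity, so it suffices to exclude interior poles. Fixing a point of $G$ and a complex line parallel to a coordinate axis through it, the restrictions of the $p_n$ are one-variable polynomials converging uniformly in $\chi$ on $\overline{G_i}$, and the key step is an argument-principle (Hurwitz) argument at the value $\infty$: if the slice limit had a pole at an interior point $b\in G_i$, then on a small circle about $b$ the number of $\infty$-points of $p_n$ would, for large $n$, equal that of the limit, namely one, which is absurd since a polynomial never takes the value $\infty$. Hence no slice has an interior pole, so on each slice the limit is either $\equiv\infty$ or finite and holomorphic (the latter by normal families, since on the finite part $f$ is locally bounded, forcing Euclidean rather than merely chordal convergence of the $p_n$). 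As $G$ is connected, a standard separation argument then yields the global dichotomy: either $f\equiv\infty$, or $f(G)\subset\C$ and $f|_G$ is separately holomorphic.

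For sufficiency, let $f$ be continuous on $\overline G$ with $f(G)\subset\C$ and $f|_G$ separately holomorphic. I would first reduce to finitely many variables: by uniform continuity of $f$ in the product topology there is a finite $F\subset I$ such that the $\chi$-values of $f$ change by less than $\epsilon$ whenever the coordinates in $F$ are held fixed, so freezing the coordinates outside $F$ at a base point $c\in G$ produces $\tilde f$, depending only on $\{z_i:i\in F\}$, with $\sup_{\overline G}\chi(\tilde f,f)<\epsilon$; moreover $\tilde f$ is again continuous, finite and separately holomorphic on $\prod_{i\in F}G_i$. The heart is to approximate such a finite-product function by genuine polynomials in $z$. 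Here I would invoke Carath\'eodory's theorem to choose homeomorphisms $\phi_i:\overline\D\to\overline{G_i}$, conformal on $\D$, set $\phi=\prod_{i\in F}\phi_i$, and use the retraction $f_\rho:=f\circ\phi\circ r_\rho\circ\phi^{-1}$, where $r_\rho(w)=\rho w$ scales the polydisc by $\rho<1$. Since $r_\rho$ pushes $\overline\D^{\,|F|}$ into a relatively compact subset of the open polydisc, $F_\rho:=f\circ\phi\circ r_\rho$ is holomorphic on a neighbourhood of $\overline\D^{\,|F|}$ (using Hartogs to pass from separate to joint holomorphy), while $f_\rho\to f$ uniformly in $\chi$ as $\rho\to1$ by uniform continuity of $f$ and of $\phi$. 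Finally I would approximate $F_\rho$ by a Taylor polynomial $T$ on the polydisc and each $\phi_i^{-1}\in A(\overline{G_i})$ by a polynomial $s_i$ via Mergelyan's theorem, so that $T(s_1(z_1),\dots,s_m(z_m))$ is an honest polynomial in $z$ approximating $f_\rho$, and hence $f$, uniformly in $\chi$.

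The main obstacle is twofold and lies at the two cruxes just indicated. In necessity it is the exclusion of interior poles, for which the Hurwitz argument at $\infty$ must be organized carefully in the several-variable setting, via one-variable slices and connectedness. In sufficiency it is the production of \emph{polynomials in the original variables} $z$, rather than in the conformal coordinates, while simultaneously accommodating the boundary values $f=\infty$; the retraction $f_\rho$ resolves both difficulties at once, converting the chordal problem into ordinary uniform polynomial approximation of a function holomorphic on a neighbourhood of a polynomially convex compactum, after which Mergelyan's theorem and Taylor expansion finish the argument and the finite-coordinate reduction lifts it to arbitrary products.
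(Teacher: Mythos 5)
Your proposal is correct, but in the sufficiency direction it takes a genuinely different route from the paper. For necessity the two arguments essentially coincide: one-variable Hurwitz at $\infty$ along axis-parallel slices, holomorphy of finite slice limits, and continuity of $\chi$-uniform limits (the paper simply cites \cite{N} and \cite{BGH} for the slice results). One caveat: your phrasing via counting $\infty$-points of $p_n$ on a small circle tacitly assumes the slice limit is already meromorphic at $b$, which is not known a priori; the clean version observes that continuity of $f$ gives $|p_n|>1$ near $b$ for large $n$, applies Hurwitz to the zero-free holomorphic functions $1/p_n\rightarrow 1/f$ to get $1/f\equiv 0$ near $b$, and then your open-closed argument in $G$ (varying one coordinate of a basic neighbourhood at a time) yields the global dichotomy --- you flagged this yourself, and the fix is routine. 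For sufficiency the paper does \emph{not} reprove the polydisc case: it transfers $f$ to $\overline\D^I$ via the Osgood--Carath\'eodory homeomorphisms $\phi_i$, quotes \cite{MN} for $\chi$-uniform polynomial approximation of $f\circ\phi^{-1}$ on the full (possibly infinite) polydisc, and then converts each $Q_n\circ\phi\in A(\overline G)$, a function of finitely many variables, into an honest polynomial via Lemma \ref{P(K)=A(K)}, whose proof rests on the Gamelin--Garnett product theorem and Oka--Weil. You instead argue from scratch: finite-coordinate reduction by uniform continuity on the compact product, the radial retraction $r_\rho$ (which simultaneously removes the boundary values $\infty$ and, with Osgood/Hartogs, produces a function holomorphic on a neighbourhood of $\overline\D^{\,|F|}$), Taylor polynomial approximation, and one-variable Mergelyan applied to each $\phi_i^{-1}$. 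Your composition $T(s_1(z_1),\dots,s_m(z_m))$ cleverly sidesteps Lemma \ref{P(K)=A(K)} altogether, since $T\circ\phi^{-1}$ has the special split form and only uniform continuity of $T$ on a slightly enlarged polydisc is needed; in effect you also re-derive the content of the \cite{MN} polydisc result via the dilation trick. What each approach buys: yours is self-contained and more elementary (no Gamelin--Garnett, no external polydisc theorem), at the cost of length; the paper's is shorter and works on $\overline\D^I$ directly, given its references. Both are valid proofs of the theorem.
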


For an example of an $\Omega$ satisfying the hypotheses of Theorem \ref{polynomials dense} but not those of Theorem \ref{chordal}, it is sufficient to give domains $\Omega_i$'s for Theorem \ref{polynomials dense} which are not Jordan domains. Consider the domain
$$
    \{x+iy:0<x<1, -5<y<\sin(1/x)\}
$$
Setting each $\Omega_i$ equal to this domain, we have that the $\Omega_i$ satisfy the assumptions of Theorem \ref{polynomials dense}, but they are not Jordan domains.

Theorem \ref{chordal} extends results from \cite{BGH}, \cite{N}, \cite{N'}, \cite{FNP} and \cite{MN}.


\section{Preliminaries}

\begin{lemma}\label{R(K)=A(K)}
If $K_1,\cdots,K_m$ are compact sets in $\C$ such that $R(K_i)=A(K_i), i=1,\cdots,m,$ and $K=K_1\times\cdots\times K_m,$ then $R(K)=A(K).$
\end{lemma}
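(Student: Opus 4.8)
The plan is to establish the two inclusions $R(K)\subseteq A(K)$ and $A(K)\subseteq R(K)$ separately, where $R(K)$ denotes the uniform closure on $K$ of the rational functions with poles off $K$. The first is routine: such a rational function is holomorphic on a neighbourhood of $K$, hence lies in $A(K)$, and $A(K)$ is uniformly closed, so $R(K)\subseteq A(K)$. For the reverse inclusion I would argue by induction on $m$, the base case $m=1$ being the hypothesis. Writing $X=K_1\times\cdots\times K_{m-1}$ and $Y=K_m$, the inductive step is to show that rational functions are dense in $A(X\times Y)$, knowing $R(X)=A(X)$ (induction) and $R(Y)=A(Y)$ (hypothesis, with $Y$ \emph{planar}).

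The key structural observation is a slicing identification. For $f\in A(X\times Y)$ and $y\in Y$, the slice $f(\cdot,y)$ is separately holomorphic and continuous, so $f(\cdot,y)\in A(X)$ for \emph{every} $y\in Y$ (here it is essential that $A$ is defined by separate holomorphy over each $K_i^\circ$ with the remaining variables ranging over the full factors, so that no holomorphy is lost on passing to boundary slices). Joint continuity of $f$ makes $F\colon y\mapsto f(\cdot,y)$ a continuous map $Y\to A(X)$, and Cauchy estimates on small discs inside $Y^\circ$ show $F$ to be holomorphic on $Y^\circ$ as an $A(X)$-valued map. Thus $f\mapsto F$ identifies $A(X\times Y)$ with the space $A(Y;E)$ of continuous $E$-valued functions on $Y$ that are holomorphic on $Y^\circ$, where $E=A(X)$.

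The engine is then a \emph{vector-valued} version of the hypothesis $R(Y)=A(Y)$: for planar compact $Y$ with $R(Y)=A(Y)$ and any Banach space $E$, the $E$-valued rational functions $\sum_k r_k(\cdot)\,e_k$ with $r_k\in R(Y)$ and $e_k\in E$ are dense in $A(Y;E)$. Granting this with $E=A(X)$, every $f\in A(X\times Y)$ is a uniform limit of finite sums $\sum_k r_k(z_m)\,a_k(z')$ with $r_k\in R(Y)$ and $a_k\in A(X)=R(X)$; approximating each coefficient $a_k$ by a rational function with poles off $X$ turns these sums into genuine rational functions with poles off $X\times Y$, i.e.\ elements of $R(K)$. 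This closes the induction and gives $A(K)\subseteq R(K)$.

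The main obstacle is the vector-valued approximation theorem, and it is exactly here that the hypothesis $R(Y)=A(Y)$ is consumed. I would deduce it from the scalar case by duality: a functional on $C(Y;E)$ annihilating all $r\otimes e$ is represented by an $E^{\ast}$-valued measure $\mu$ for which, for each $e\in E$, the scalar measure $\langle e,d\mu\rangle$ has Cauchy transform vanishing off $Y$ and hence annihilates $A(Y)$. One must then upgrade this, uniformly over $e$, to annihilation of all of $A(Y;E)$; the delicate point is this uniform upgrade, which rests on the approximation property of $C(Y)$, equivalently on the identification $A(Y;E)=A(Y)\,\widehat{\otimes}_{\varepsilon}E$ as a closed subspace of $C(Y;E)$. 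The planar Cauchy-transform characterization of $R(Y)^{\perp}$ is precisely what lets the scalar hypothesis enter. A secondary point worth checking is the slice identification in the case where some $K_i^{\circ}$ is not dense in $K_i$, which is exactly why the separate-holomorphy definition of $A$, rather than joint holomorphy on the topological interior of the product, is the correct one.
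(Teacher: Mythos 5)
Your architecture --- induction on $m$, the slice identification $A(X\times Y)\cong A(Y;E)$ with $E=A(X)$, and then a vector-valued rational approximation theorem over the planar factor $Y=K_m$ --- is a genuinely different route from the paper, which proves the lemma in one line by citing the final corollary of Gamelin--Garnett, \emph{Constructive techniques in rational approximation}: if $R(X)=A(X)$ for compact $X\subset\C$ and $R(Y)=A(Y)$ for compact $Y\subset\C^n$, then $R(X\times Y)=A(X\times Y)$; finitely many applications give the lemma. In effect you are attempting to prove that corollary rather than invoke it, and your reduction steps (the slicing, the final approximation of the coefficients $a_k$ by rational functions in the remaining variables, and your closing caveat about separate holomorphy versus holomorphy on the interior of the product) are sound.

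The genuine gap is in the duality argument for the vector-valued step, which as written is circular. From an annihilating $E^{\ast}$-valued measure $\mu$ you correctly conclude that each scalar measure $\langle e,d\mu\rangle$ annihilates $A(Y)$, hence that $\mu$ annihilates $A(Y)\otimes E$ --- but that is your starting point: to deduce $\int\langle f,d\mu\rangle=0$ for every $f\in A(Y;E)$ you need exactly the density of $A(Y)\otimes E$ in $A(Y;E)$ that you are trying to prove. The proposed repair does not close this circle: the approximation property of $C(Y)$ yields $C(Y;E)=C(Y)\,\widehat{\otimes}_{\varepsilon}E$ (compact range of $f$ plus partitions of unity), but those partitions of unity are merely continuous, so this says nothing about the closed subspace $A(Y;E)$; and since $A(Y)\,\widehat{\otimes}_{\varepsilon}E$ is by definition the closure of $A(Y)\otimes E$ in $C(Y;E)$, the identification $A(Y;E)=A(Y)\,\widehat{\otimes}_{\varepsilon}E$ is literally equivalent to the density assertion --- citing it begs the question. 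The vector-valued statement is true, but the scalar-to-vector transfer is not soft: the known route is Vitushkin's constructive localization scheme (cutoff multiplication, Cauchy transforms against $\overline\partial$ of cutoffs, Laurent-coefficient matching by scalar rational functions multiplied by vector coefficients), all of which is linear in $f$ with bounds depending only on scalar capacity data and hence applies verbatim to Banach-valued $f$. This is precisely why the paper cites [GG]: the \emph{constructive} method gives approximation uniformly in parameters, which is the product statement. So either cite the [GG] corollary, as the paper does, or actually run the constructive scheme with values in $E=A(X)$; the duality route as written fails.
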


\begin{proof}
If $m=1,$ there is nothing to prove. Suppose $m>1.$
The last corollary in \cite{GG} states that if $X$ is a compact subset of $\C$ such that $R(X)=A(X),$ and if $Y$ is a compact subset of $\C^n$ such that $R(Y)=A(Y),$ then $R(X\times Y)= A(X\times Y).$  Setting $X=K_{m-1}$ and $Y=K_m,$ we conclude that $R(K_{m-1}\times K_m)=A(K_{m-1}\times K_m).$ If $m=2,$ we are through. If not, applying the corollary again, with $X=K_{m-2}, Y=K_{m-1}\times K_m,$ we obtain
$$
    R(K_{m-2}\times K_{m-1}\times K_m) =  R(K_{m-2}\times (K_{m-1}\times K_m)) =
$$
$$
    A(K_{m-2}\times (K_{m-1}\times K_m)) =  A(K_{m-2}\times K_{m-1}\times K_m).
$$
After finitely many steps we arrive at the desired conclusion.
\end{proof}

\begin{lemma}\label{P(K)=A(K)}
If $K_1,\cdots,K_m$ are compact sets in $\C$  and $K=K_1\times\cdots\times K_m,$ then $P(K)=A(K),$ if and only if  $K_i^c$ is connected, for each $i=1,\cdots,m.$
\end{lemma}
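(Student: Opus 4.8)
The plan is to exploit the sandwich $P(K)\subseteq R(K)\subseteq A(K)$, which holds because polynomials are rational functions without poles, and because a rational function without poles on $K$ extends holomorphically to a neighbourhood of $K$ and hence lies in $A(K)$. Consequently $P(K)=A(K)$ if and only if simultaneously $P(K)=R(K)$ and $R(K)=A(K)$, and I would establish the two implications of the lemma separately.

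For the necessity (``only if''), I would argue by contraposition. Suppose some $K_j^c$ is disconnected and choose a point $a$ in a bounded component $V$ of $\C\setminus K_j$. The function $g(z_1,\dots,z_m)=1/(z_j-a)$ lies in $A(K)$, since $a\notin K_j$. If polynomials $p_n$ converged to $g$ uniformly on $K$, then freezing $z_i=c_i\in K_i$ for $i\neq j$ would produce one-variable polynomials $q_n(w)=p_n(c_1,\dots,w,\dots,c_m)$ converging uniformly on $K_j$ to $1/(w-a)$. Since $\partial V\subseteq K_j$, the maximum principle applied to the entire functions $(w-a)q_n(w)-1$ on $\overline V$ forces $|{-1}|=|(a-a)q_n(a)-1|\le\sup_{\partial V}|(w-a)q_n(w)-1|\to 0$, a contradiction. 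Hence $g\notin P(K)$ and $P(K)\neq A(K)$.

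For the sufficiency (``if''), assume each $K_i^c$ is connected. In one variable Mergelyan's theorem gives $P(K_i)=A(K_i)$, and since always $P(K_i)\subseteq R(K_i)\subseteq A(K_i)$, this yields $R(K_i)=A(K_i)$ for every $i$; Lemma \ref{R(K)=A(K)} then upgrades this to $R(K)=A(K)$. It remains to show $R(K)=P(K)$. Connectivity of each $K_i^c$ means each $K_i$ is polynomially convex, and the polynomial hull of a product is the product of the hulls, so $K=\prod_i K_i$ is polynomially convex. By the Oka--Weil theorem, every function holomorphic on a neighbourhood of $K$ --- in particular every rational function with no poles on $K$ --- is a uniform limit on $K$ of polynomials, whence $R(K)\subseteq P(K)$ and therefore $P(K)=R(K)$. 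Combining, $P(K)=R(K)=A(K)$.

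The routine parts are the slicing/maximum-principle argument and the reduction afforded by the sandwich. I expect the main obstacle to be the several-variable step $R(K)=P(K)$: one must correctly invoke polynomial convexity of the product together with the Oka--Weil approximation theorem, and verify that the hull of the product really is the product of the one-dimensional hulls $\widehat{K_i}=K_i$. The conversion of the purely topological hypothesis ``$K_i^c$ connected'' into the analytic statement $R(K_i)=A(K_i)$, which is where Mergelyan's theorem enters, is the other point requiring care.
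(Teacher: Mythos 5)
Your proof is correct and follows essentially the same route as the paper: Mergelyan's theorem plus Lemma \ref{R(K)=A(K)} to produce a rational (hence, by positive distance of the poles, holomorphic-near-$K$) approximant, then polynomial convexity of $K$ and the Oka--Weil theorem to pass to polynomials. The only difference is that you spell out the necessity direction in detail (the function $1/(z_j-a)$, slicing to one variable, and the maximum principle on a bounded complementary component), whereas the paper dismisses it as obvious by viewing one-variable functions as elements of $A(K)$.
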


\begin{proof}
Suppose $K_i^c$ is connected, for each $i=1,\cdots,m.$
Let $f\in A(K)$ and $\varepsilon>0.$ By Mergelyan's theorem $P(K_i)=A(K_i)$ and so by Lemma \ref{R(K)=A(K)}, there is a rational function $R$ whose singularities lie outside $K$ such that $|R-f|<\varepsilon/2$ on $K.$  Since the singularities form a closed set, they are at a positive distance from $K$ and so $R$ is holomorphic on $K.$ Since $K$ is polynomially convex, it follows from the Oka-Weil Theorem that there is a polynomial $P$ such that $|P-f|<\varepsilon$ on $K,$ which concludes the proof in one direction.

The other direction is obvious by considering functions depending on only one variable and seeing them as elements of $A(K).$
\end{proof}


\section{Proof of Theorem \ref{polynomials dense}}

Let $\Omega_i\subset\C, i=1,\cdots,m$ be bounded domains such that $\overline\Omega_i^c$ are connected and $(\overline\Omega_i)^o=\Omega_i.$ Set $\Omega=\Omega_1\times\cdots\Omega_m.$
Let $g\in A(\Omega)= A(\Omega_1\times\cdots\times\Omega_m)$ and $\epsilon>0.$ Then, by Lemma \ref{P(K)=A(K)} there exists a polynomial $Q(z_1,\cdots,z_m)$ such that $|g-Q|<\epsilon$ on $\overline\Omega_1\times\cdots\times\overline\Omega_m.$

Suppose there is an $M<+\infty,$ such that for all $i=1,\cdots,m$ and for all $z,w\in\Omega_i,$ there exists a path $\gamma$ joining $z$ and $w$ in $\Omega_i,$ whose length is at most $M.$ We shall prove that the polynomials are dense in $A^\infty(\Omega_1\times\cdots\times\Omega_m).$ That is, given $f\in A^\infty(\Omega_1\times\cdots\times\Omega_m), \varepsilon>0$ and $n\in\N,$ we shall find a polynomial $P(z_1,\cdots,z_m)$ such that
$$
   \left|\frac{\partial^{\alpha_1+\cdots+\alpha_m}P}{\partial z_1^{\alpha_1}\cdots\partial z_m^{\alpha_m}} -
    \frac{\partial^{\alpha_1+\cdots+\alpha_m}f}{\partial z_1^{\alpha_1}\cdots\partial z_m^{\alpha_m}}\right| < \varepsilon,
    \quad 0\le\alpha_i\le n.
$$

Consider
$$
    \frac{\partial^{nm}}{\partial z_1^n\cdots\partial z_m^n}f.
$$
Since Mergelyan's theorem holds on $\overline\Omega_1\times\cdots\overline\Omega_m,$ we find a polynomial $Q(z_1,\cdots,z_m)$ such that
\begin{equation}\label{Q-frac}
    \left|Q(z_1,\cdots,z_m)-\frac{\partial^{nm}}{\partial z_1^n\cdots\partial z_m^n}f(z_1,\cdots,z_m)\right| < \varepsilon \quad
    \mbox{on} \quad \overline\Omega_1\times\cdots\overline\Omega_m.
\end{equation}

Without loss of generality, we may suppose that $0\in\Omega_i, i=1,\cdots,m$ and $M\le 1.$ For a function $E(z_1,\cdots,z_m)$ holomorphic in $\Omega_1\times\cdots\times\Omega_m,$ we define a multiple integral operator  $T[E],$ by integrating with respect to each variable $n$ times, each time starting at $0,$  while keeping the other variables fixed. The integrals are along paths starting at $0$ and of length less than $1.$ It follows that
\begin{equation}\label{T[E]}
    |T[E]|\le\sup|E|.
\end{equation}

Set
$$
    T[Q] = A \quad \mbox{and} \quad  T\left[\frac{\partial^{nm}}{\partial z_1^n\cdots\partial z_m^n}f\right] = B.
$$
Then, from (\ref{Q-frac}) and (\ref{T[E]}), we have
\begin{equation}\label{A-B}
    \left|T\left[Q-\frac{\partial^{nm}}{\partial z_1^n\cdots\partial z_m^n}f\right]\right| =
    \left|T[Q]-T\left[\frac{\partial^{nm}}{\partial z_1^n\cdots\partial z_m^n}f\right]\right| = |A-B| < \varepsilon,
\end{equation}
For example, for $n=m=2,$ we have
$$
    T[E](z_1,z_2) = \int_{b_2=0}^{z_2}\int_{b_1=0}^{b_2}\int_{a_2=0}^{z_1}\int_{a_1=0}^{a_2}E(a,b)da_1da_2db_1db_2
$$
and, in particular,
$$
    B(z_1,z_2) = T\left[\frac{\partial^4}{\partial z_1^2\partial z_2^2}f\right](z_1,z_2) =
$$
$$
    T[D^2_1D^2_2f](z_1,z_2) =  \int_{b_2=0}^{z_2}\int_{b_1=0}^{b_2}\int_{a_2=0}^{z_1}\int_{a_1=0}^{a_2}(D^2_1D^2_2f)(a,b)da_1da_2db_1db_2.
$$

The function $A(z_1,\cdots,z_m)$ is a polynomial, since $Q$ is a polynomial. The function $B(z_1,\cdots,z_m)$ is a finite sum, where one term  is $f(z_1,\cdots,z_m).$ The remaining terms are products of polynomials and mixed derivatives of $f$ evaluated at points $(w_1,\cdots,w_m),$ where $w_i=0$ or $w_i=z_i,$ but there is at least one $i$ where $w_i=0.$  Thus, each of these remaining terms is the product of a polynomial and an element of $A^\infty(\prod_{i\in S}\Omega_i),$ where $S\subset\{1,\cdots,m\}$ contains less than $m$ elements. Relation (\ref{A-B}) can be written as
\begin{equation}\label{F}
    |F-f| < \varepsilon \quad \mbox{on} \quad \overline\Omega_1\times\cdots\overline\Omega_m, \quad \mbox{where} \quad  F\equiv A-B+f.
\end{equation}

Remark. Since each term of $F$ is the product of a polynomial and an $A^\infty$-function depending on strictly less than $m$ of the  variables $z_1,\cdots,z_m,$  it follows by induction that $F$ and any finite set of its partial derivatives can be uniformly  approximated by a polynomial and respectively its corresponding partial derivatives on $\overline\Omega_1\times\cdots\times\overline\Omega_m.$ See \cite{NZ} for the case $m=1,$ although it is not really needed.
Hence, it is sufficient to show that, for all $0\le\alpha_1\le n, i=1,\cdots,m,$ we have
$$
    \left|\frac{\partial^{\alpha_1+\cdots+\alpha_m}}{\partial z_1^{\alpha_1}\cdots\partial z_m^{\alpha_m}}F -
    \frac{\partial^{\alpha_1+\cdots+\alpha_m}}{\partial z_1^{\alpha_1}\cdots\partial z_m^{\alpha_m}}f\right| < \varepsilon,
$$
which is equivalent to
\begin{equation}\label{partial(A-B)}
    \left|\frac{\partial^{\alpha_1+\cdots+\alpha_m}}{\partial z_1^{\alpha_1}\cdots\partial z_m^{\alpha_m}}(A-B)\right| < \varepsilon.
\end{equation}
But $A-B = T[\Gamma],$ where
$$
    \Gamma = Q(z_1,\cdots,z_m)-\frac{\partial^{nm}}{\partial z_1^n\cdots\partial z_m^n}f(z_1,\cdots,z_m)
    \quad \mbox{and} \quad |\Gamma|<\varepsilon.
$$
Thus, we find that the left member of (\ref{partial(A-B)}) is a multiple integral of $\Gamma,$ where we integrate $\Gamma$ $n-\alpha_i$ times with respect to each variable $z_i$ . Since the paths of integration are of length at most $M\le 1$ and $|\Gamma|<\varepsilon,$ we have the estimate (\ref{partial(A-B)}). This concludes the proof.

Now, we shall extend the latter result to an arbitrary (not necessarily countable) number of complex variables.

Let $I$ be an infinite set and let $\Omega_i, i\in I,$ be a family of bounded domains in $\C,$ such that $(\overline\Omega_i)^o=\Omega_i,$ $\overline\Omega_i^c$ is connected and there exist $M_i<+\infty,$ such that every pair of points $z$ and $w$ in $\Omega_i$ can be joined by a path in $\Omega_i$ of length less than $M_i.$  Set $\Omega=\prod\{\Omega_i:i\in I\}$ and consider the spaces of functions $A(\Omega)$ and $A^\infty(\Omega).$ The space $A(\Omega)$ consists of all functions
$$
    f:\prod\{\overline\Omega_i:i\in I\} \rightarrow \C
$$
continuous in the product topology, such that if we fix all of the variables but one, $f$ is a holomorphic function of that variable in the corresponding $\Omega_i.$ For such a function $f$ and for an arbitrary $\varepsilon,$ one can find a finite subset $F\subset I,$ such that for every $\zeta=(\zeta_i)_{i\in I}\in\prod_{i\in I}\overline\Omega_i,$ if we put $w(F,\zeta,z) = (w_i)_{i\in I},$ with
$$
     w_i =
    \left\{
    \begin{array}{lll}
     z_i & \mbox{for} & i\in F\\
     \zeta_i & \mbox{for} & i\in I\setminus F,
    \end{array}
    \right.
$$
by \cite{MN}, we have $|f(z)-f(w(F,\zeta,z)|<\varepsilon/2.$ For $F$ and $\zeta$ fixed, the function $z\mapsto f(w(F,\zeta,z)$ is essentially a function depending on a finite number of variables. Invoking the preceding results, we find a polynomial $P,$ depending on a finite number of variables, such that $|f(w(F,\zeta,z)-P(z)|<\varepsilon/2,$ whence $|f-P|<\varepsilon.$ We thus see that the polynomials are dense in $A(\Omega),$ for the topology of uniform convergence on $\prod_{i\in I}\overline\Omega_i.$ One sees immediately that the reciprocal is also true. Thus, $A(\Omega)$ is precisely the set of uniform limits on $\prod_{i\in I}\overline\Omega$ of polynomials. We notice that each polynomial depends on a finite set (depending on the polynomial) of variables.

Consider a function $f:\Omega\rightarrow\C,$ which is holomorphic with respect to each variable separately. Then, the mixed partial derivatives, with respect to a finite set of indices $F\subset I$ are well-defined on $\Omega.$ Such a function belongs to $A^\infty(\Omega)$ if all mixed partial derivatives extend to $\prod_{i\in I}\overline\Omega_i$ continuously with respect to the product topology; in other words, $f\in A^\infty(\Omega)$ if and only if all mixed partial derivatives of $f$ belong to $A(\Omega).$ The natural topology on $A^\infty(\Omega)$ is that of uniform convergence on $\prod_{i\in I}\overline\Omega$ of all mixed partial derivatives (of finite order). We shall show that the polynomials are dense in $A^\infty(\Omega).$

Let $f\in A^\infty(\Omega), \varepsilon>0, n\in\N$ and a finite set $\widetilde F\subset I$ be given. We must find a polynomial $P,$ such that $|Df-DP|<\varepsilon$ on $\prod_{i\in I}\overline\Omega_i,$ for each mixed partial derivative, for the variables $z_i, i\in\widetilde F,$ where the derivation with respect to each of the variables is of order $\alpha_i, 0\le\alpha_i\le n.$ Since each function in $A(\Omega)$ can be approximated by its ``restriction" to a finite product of $\overline\Omega_i,$ as we have already mentioned, there exists a finite set $F$ with  $\widetilde F\subset F\subset I$ and a point $\zeta=(\zeta_i)_{i\in I}$ in $\prod_{i\in I}\overline\Omega_i,$ such that $|Df-D\Phi|<\varepsilon/2$ on $\prod_{i\in I}\overline\Omega_i,$ where $\Phi(z_i)_{i\in I}=f(w_i)_{i\in I},$ where $w_i=z_i$ for $i\in F$ and $w_i=\zeta_i$ for $i\in I\setminus F$ and this for each mixed partial derivative $D,$ where the derivatives with respect to the variables $z_i, i\in F$ are of order $\alpha_i, 0\le\alpha_i\le n.$ Since $F$ is a finite set, the function $\Phi$ depends on finitely many variables. By the preceding results, there is a polynomial $P,$ such that $|DP-D\Phi|<\varepsilon/2$ on $\prod_{i\in F}\overline\Omega_i$ for each such mixed partial derivative $D.$ Thus, we find that $|DP-Df|<\varepsilon$ on $\prod_{i\in I}\overline\Omega_i$ for the preceding finite set of partial differential operators. This concludes the proof.

It is also worth noting that, even if $I$ is uncountable, every function in $A^\infty(\Omega)$ depends on only a countable set of variables, because it is continuous with respect to the product topology \cite{MN}.

We present an example. The function
$$
    f((z_i)_{i=1}^\infty) = \sum_{n=1}^\infty\frac{z_n^n}{n^2}
$$
belongs to $A^\infty$ of the polydisc $\prod_{i=1}^\infty\{z:|z|<1\}$ as can easily be verified.
However, its directional derivative at a point $(z_{m,1},z_{m,2},\cdots,z_{m,m},0,0,\cdots),$ in the direction $\nu=(1,1,\cdots)$ equals
$$
    \sum_{n=1}^m \frac{z_{m,n}^{n-1}}{n}
$$
and does not stay bounded, if we choose $z_{m,n}=1-1/m, n=1,2,\cdots,m.$
Thus, the directional derivative does not extend continuously to the closed polydisc of infinitely many variables, for it would be bounded since the product is compact. This is in contrast with the situation in finitely many variables.


\section{Proof of Theorem \ref{chordal}}

Suppose
$$
    \sup_{z\in\overline G}\chi(P_n(z),f(z))\rightarrow 0,
$$
where $P_n(z)$ are polynomials (each one depending on a finite number of variables), $f:\overline G\rightarrow\overline\C$ is a function and $z=(z_i)_{i\in I}\in\overline G, z_i\in \overline G_i$ and $\overline\C=\C\cup\{\infty\}.$

If $f(z)=\infty$ for some point $z=(z_i)_{i\in I}$ in $G,$ then according to \cite{N}, it follows from the Hurwitz theorem in one complex variable that $f$ is identically equal to $\infty$ on every complex line passing through the point $z.$ It follows easily that $f\equiv\infty$ on $\overline G.$ Assume $f(G)\subset\C.$ Then, according to \cite{BGH} and \cite{N}, the function $f$ is holomorphic on each complex line in $G.$ Finally, since every polynomial is continuous on $\overline G,$ endowed with the product topology, the $\chi$-uniform limit $f$ is also continuous on $\overline G.$ This completes the proof of one direction of Theorem \ref{chordal}.

The constant function equal to $\infty$ is the $\chi$-uniform limit of the constant polynomials $P_n\equiv n, n=1,2,\cdots.$ Let $f:\overline G\rightarrow\C$ be continuous, $f(G)\subset\C$ and $f|_G$ separately holomorphic. We consider a conformal mapping $\phi_i:G_i\rightarrow\D$ of $G_i$ onto the open unit disc $\D=\{w\in\C:|w|<1\}.$ According to the Osgood-Carath\'eodory Theorem (see \cite{K}), $\phi_i$ extends to a homeomorphism $\phi_i:\overline G_i\rightarrow\overline\D.$ Let $\phi=(\phi_i)_{i\in I}:\overline G\rightarrow\overline\D^I$ be defined by $\phi(z)=(\phi_i(z_i))_{i\in I}.$ Then, $\phi$ is a homeomorphism, $\phi^{-1}=(\phi_i^{-1})_{i\in I},$ $\phi_i$ is holomorphic on $G_i$ and $\phi_i^{-1}$ is holomorphic on $\D.$ The function $f\circ\phi^{-1}$ is the $\chi$-uniform limit of polynomials $Q_n$ on $\overline\D^I$ (see \cite{MN}). Therefore, we have the $\chi$-uniform convergence of $Q_n\circ\phi$ to $f$ on $\overline G.$ But $Q_n\circ\phi\in A(\overline G).$ In fact, $Q_n\circ\phi$ depends on a finite number of variables. According to Lemma \ref{P(K)=A(K)}, there is a polynomial $P_n$ such that
$$
    \sup_{z\in\overline G}|P_n(z)-(Q_n\circ\phi)(z)|<1/n.
$$
Since $\chi(a,b)\le|a-b|$ for all $a,b\in\C,$ it follows that $\sup\chi(P_n(z),(Q_n\circ\phi)(z))<1/n$ on $\overline G.$ Since,
$$
    \sup_{z\in\overline G}\chi((Q_n\circ\phi)(z),f(z))\rightarrow 0, \quad \mbox{as} \quad n\rightarrow\infty,
$$
the triangle inequality implies that $P_n\rightarrow f$ $\chi$-uniformly on $\overline G.$ This completes the proof.

\end{document}